\def\RSthmtxt{theorem~}\newref{thm}{name = \RSthmtxt}}
\def\RSlemtxt{lemma~}\newref{lem}{name = \RSlemtxt}}
\theoremstyle{plain}
\newtheorem{thm}{\protect\theoremname}
\theoremstyle{plain}
\newtheorem{fact}[thm]{\protect\factname}
\theoremstyle{plain}
\newtheorem{question}[thm]{\protect\questionname}
\theoremstyle{definition}
\newtheorem{defn}[thm]{\protect\definitionname}
\theoremstyle{definition}
\newtheorem{example}[thm]{\protect\examplename}
\theoremstyle{plain}
\newtheorem{lem}[thm]{\protect\lemmaname}
\theoremstyle{remark}
\newtheorem{rem}[thm]{\protect\remarkname}
\theoremstyle{remark}
\newtheorem*{rem*}{\protect\remarkname}
\theoremstyle{plain}
\newtheorem{cor}[thm]{\protect\corollaryname}
\date{ }
\DeclareMathOperator{\sgn}{sgn}
\providecommand{\corollaryname}{Corollary}
\providecommand{\definitionname}{Definition}
\providecommand{\examplename}{Example}
\providecommand{\factname}{Fact}
\providecommand{\lemmaname}{Lemma}
\providecommand{\questionname}{Question}
\providecommand{\remarkname}{Remark}
\providecommand{\theoremname}{Theorem}
\begin{document}
\title{A note on the Turing universality of homogeneous potential wells and
geodesible flows}
\author{Khang Manh Huynh}

\maketitle

\begin{abstract}
We explore some properties of flows with strongly adapted 1-forms,
originally discovered in \parencite{taoUniversalityPotentialWell2017},
which can be used to embed Turing machines into dynamical systems.
In particular, we discuss some relations to geodesible flows, and
show that even a slight modification of the dynamical system, such
as homogeneity, can lead to an intermediate class of flows between
adapted flows and geodesible flows, while still retaining Turing universality.
\end{abstract}

\subsection*{Acknowledgments}

The author is grateful to Terence Tao for valuable discussions during
the preparation of this work.

\section{Introduction}

In \parencite{taoFiniteTimeBlowup2016a}, Terence Tao demonstrated
the finite-time blowup of an averaged version of the Navier-Stokes
equation, by constructing a system of logic gates within ideal fluid.
This led to the idea of embedding an actual universal Turing machine
into dynamical systems. In \parencite{taoUniversalityPotentialWell2017},
it was demonstrated that a Turing machine, represented as the flow
of a smooth vector field, can be embedded into potential well and
nonlinear wave systems. Such systems are then considered to be universal,
with arbitrarily complex behaviors.

Since then, there have been various attempts to explore and classify
the types of flows and dynamical systems that could give rise to such
embeddings, as well as their relations to other subjects in differential
geometry. In particular, Tao showed that a flow generated by a non-vanishing
smooth vector field $X$ on a smooth, closed compact manifold $M$,
can be embedded into a potential well system if and and only if there
is\textbf{ }a 1-form $\theta$ such that 
\[
\theta\cdot X>0,\text{ and }\iota_{X}d\theta\text{ is exact}
\]
 We call $\theta$ a \textbf{strongly adapted $1$-form}. One very
important subclass of flows with strongly adapted 1-forms is that
of \textbf{geodesible flows }(i.e. there exists a Riemannian metric
$g$ that turns the orbits of $X$ into geodesics). By \parencite[Proposition 6.8, page 71]{tondeurGeometryFoliations1997}
or \parencite{gluckDynamicalBehaviorGeodesic1980}, a flow generated
by $X$ is geodesible if and only if there is\textbf{ }a 1-form $\theta$
such that 
\[
\theta\cdot X>0,\text{ and }\iota_{X}d\theta=0
\]

Note that we are using the definition of geodesibility from \parencite{tondeurGeometryFoliations1997},
which is somewhat weaker than that in \parencite{taoUniversalityPotentialWell2017},
where $g\left(X,X\right)$ is required to be $1$ and $\nabla_{X}X=0$.
We will call the latter\textbf{ strongly geodesible flows}. By \parencite[Proposition 6.7, page 71]{tondeurGeometryFoliations1997},
a flow generated by $X$ is strongly geodesible if and only if there
is a 1-form $\theta$ such that 
\[
\theta\cdot X=1,\text{ and }\iota_{X}d\theta=\mathcal{L}_{X}\theta=0
\]

A geodesible flow $X$ can be turned into a strongly geodesible flow
by multiplying $X$ with $\frac{1}{\theta\cdot X}$. We also note
the similarity to Reeb vector fields in contact geometry \parencite{etnyreContactTopologyHydrodynamics1997}.

All the examples from \parencite{taoUniversalityPotentialWell2017}
turn out to be geodesible, including the universal Turing machine
which is strongly geodesible. 

We observe that the class of flows with strongly adapted 1-forms is
distinct from the class of strongly geodesible flows (by \Exaref{torus}).
This suggests that the potential well system has more degrees of freedom
than necessary for Turing universality, and it should be possible
to put more constraints into the specifications of the system, such
as homogeneity, or embedding into a sphere, while still retaining
Turing universality. Indeed, that is what this note aims to demonstrate
via the Nash embedding theorem.
\begin{fact}
The class of potential well systems with homogeneous potentials of
degree $k$(where $k\neq0$) is universal.
\end{fact}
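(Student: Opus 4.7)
The plan is to construct a homogeneous potential $V$ of degree $k$ on $\mathbb{R}^{N}$ whose Newtonian system $\ddot{u}=-\nabla V(u)$ admits the universal Turing machine as an invariant trajectory. I start from the universal \emph{strongly geodesible} flow constructed in \parencite{taoUniversalityPotentialWell2017}: a compact Riemannian manifold $(M,g)$ together with a vector field $X$ satisfying $g(X,X)\equiv 1$ and $\nabla^{g}_{X}X=0$. The central observation is that the Euler identity $x\cdot\nabla V=kV$ forced by homogeneity is exactly compatible with the centripetal-type acceleration experienced by a unit-speed curve constrained to a round sphere; this compatibility is what makes the modification work for every $k\ne 0$.

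The construction proceeds in three steps. First, by the Nash embedding theorem (with refinements to targets of constant positive curvature), fix an isometric embedding $\iota\colon(M,g)\hookrightarrow S^{N-1}_{R}\subset\mathbb{R}^{N}$ with $\iota(M)$ of positive codimension inside the sphere. Second, define a smooth function $v$ on $S^{N-1}_{R}$ by setting $v\equiv 1/k$ on $\iota(M)$ and extending so that the derivative of $v$ in any direction normal to $\iota(M)$ but tangent to $S^{N-1}_{R}$ matches the corresponding component of $-\mathrm{II}(X,X)$, where $\mathrm{II}$ is the second fundamental form of $\iota(M)\hookrightarrow\mathbb{R}^{N}$; such an extension exists by standard jet-extension arguments since $\iota(M)$ has positive codimension in the sphere. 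Third, extend $v$ homogeneously by
\[
V(x)=\bigl(|x|/R\bigr)^{k}\,v\bigl(Rx/|x|\bigr),\qquad x\in\mathbb{R}^{N}\setminus\{0\},
\]
producing a smooth potential of degree $k$.

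Verification that $\ddot{u}=-\nabla V(u)$ holds along any embedded orbit of $X$ consists of decomposing $\nabla V|_{\iota(M)}$ into radial, tangent-to-$\iota(M)$, and normal-to-$\iota(M)$-but-tangent-to-sphere components and matching each against $-\ddot{u}=-\mathrm{II}(X,X)$. Euler's identity gives the radial component magnitude $kv|_{M}/R=1/R$, exactly the centripetal contribution coming from the constraint $|\iota|^{2}=R^{2}$; the tangent-to-$\iota(M)$ component of $\nabla V$ vanishes because $v|_{M}$ is constant, matching the geodesic equation $\nabla^{g}_{X}X=0$; and the normal-but-tangent-to-sphere component was arranged by construction. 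Energy is then automatically conserved at $E=\tfrac{1}{2}+\tfrac{1}{k}$. The main obstacle is securing a smooth isometric embedding of $(M,g)$ into a round sphere rather than merely into Euclidean space, since without the spherical constraint the radial-gradient computation fails and one loses the delicate match between Euler's identity and the centripetal term. A minor subtlety is that the homogeneous extension of $v$ is generally non-smooth at the origin, but the embedded dynamics stay on $\iota(M)$, which is bounded away from $0$, so this causes no issue.
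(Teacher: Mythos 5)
Your proposal is correct and is essentially the paper's own construction, specialized to the only case the Fact requires: a strongly geodesible universal flow embedded spherically, so that the scaling function $R$ appearing in the paper's general argument is constant. Both arguments Nash-embed the flow manifold into a round sphere, observe that the restricted potential is then forced to equal $1/k$ there --- your matching of Euler's identity against the centripetal term is precisely the paper's computation that $kv=\left\langle -YYQ,Q\right\rangle =\left|YQ\right|^{2}=1$ when $\left|Q\right|$ is constant and the orbits are unit-speed geodesics --- then extend $v$ over the sphere by prescribing its normal jet along the image, and finally homogenize; the paper's extra machinery (the technical lemma and the condition involving $d(YY(R^{2}/2))$) is there to characterize the general non-spherical, non-geodesible case, which the Fact does not need.
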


As an aside, we also discover another class of flows characterizing
homogeneous potential wells of degree two (in \Thmref{homo2con} and
\Thmref{WellHomo}), that is distinct from the class of flows with
strongly adapted 1-forms (see \Exaref{torus}). 
\begin{question}
\label{que:how_many} How many distinct classes of flows characterizing
dynamical systems exist between strongly geodesible flows and flows
with strongly adapted 1-forms? Can they be usefully classified?
\end{question}

Since the preparation of this note, there have been various new developments,
including the investigation of the Euler equation in \parencite{taoUniversalityIncompressibleEuler2019,taoUniversalityIncompressibleEuler2018,peralta-salasCharacterization3DSteady2020}.
In particular, \parencite{cardonaUniversalityEulerFlows2019} demonstrated
the Turing universality of stationary Euler flows, using tools from
contact topology and geometric $h$-principles. 

\section{Preliminary definitions\label{sec:Preliminary-definitions}}
\begin{defn}
Let $M$ be a smooth manifold and $X\in\mathfrak{X}M$ be a smooth
vector field of $M$. Then $(M,X)$ is called a\emph{ (smooth) flow}.
The flow is called \emph{compact} if $M$ is compact, and \emph{nonsingular}
if $X$ is nonsingular $(X_{p}\neq0\;\forall p\in M)$. 
\end{defn}

\begin{example}
We consider two basic systems:
\begin{itemize}
\item When $M=T^{*}\mathbb{R}^{n}=\mathbb{R}^{n}\times\mathbb{R}^{n}$ and
$X(q,p)=(p,-\nabla V(q))$ where $V:\mathbb{R}^{n}\to\mathbb{R}$
is a smooth potential function, we denote the flow as $\text{Well}(\mathbb{R}^{n},V)$.
The associated Hamiltonian is $\mathfrak{H}(q,p)=\frac{|p|^{2}}{2}+V(q)$,
while the Lagrangian is $\mathfrak{L}(q,p)=\frac{|p|^{2}}{2}-V(q).$
The symplectic potential is defined in coordinates as $\Theta=p_{i}dq^{i}$,
while the canonical symplectic form is $\Omega=-d\Theta=dq^{i}\wedge dp_{i}$.
Then $\Theta\cdot X=\left|p^{2}\right|$ and 
\[
\iota_{X}d\Theta=-d\mathfrak{H},\mathcal{L}_{X}\Theta=d\mathcal{L}
\]
\item Let $\mathbb{T}=\mathbb{R}/\mathbb{Z}$ and $H_{d}^{m}=C^{\infty}(\mathbb{T}^{d}\to R^{m})$.
When $M=T^{*}(H_{d}^{m})=H_{d}^{m}\times H_{d}^{m}$ (a Frechet manifold/vector
space) and $X(q,p)=(p,\Delta_{\mathbb{T}^{d}}q-\nabla_{\mathbb{R}^{m}}V(q))$
where $V:\mathbb{R}^{m}\to\mathbb{R}$ is a smooth potential function,
we denote the flow as $\mathrm{NLW}(\mathbb{T}^{d},\mathbb{R}^{m},V)$. 
\end{itemize}
We can treat $\text{Well}(\mathbb{R}^{m},V)$ as a subset of $\mathrm{NLW}(\mathbb{T}^{d},\mathbb{R}^{m},V)$
where $q,p$ are constant functions.
\end{example}

\begin{defn}
An \emph{embedding} of the flow $(N,Y)$ into the flow $(M,X)$ is
an embedding $\phi:N\to M$ such that $d\phi\cdot Y=X$. When $M$
is $T^{*}(H_{d}^{m})$, an embedding means a smooth (in a Gateaux
sense) injective immersion. 

A\emph{ }1-form\emph{ $\theta$ }of the flow $(M,X)$ is called \textbf{weakly
adapted }when $\theta\cdot X\geq0$ and $\mathcal{L}_{X}(\theta)$
is exact. $\theta$ is called \textbf{strongly adapted} if $\theta\cdot X>0$.
\end{defn}

\begin{fact}
By \cite{taoUniversalityPotentialWell2017}, there is an embedding
from $(M,X)$ into $\text{Well}(\mathbb{R}^{n},V)$ for some $n$
and $V$ if and only if there is a strongly adapted 1-form for $(M,X).$
Under this embedding, $\theta\cdot X$ corresponds to the kinetic
energy $|p|^{2}$ and $\mathcal{L}_{X}(\theta)$ corresponds to $d\mathfrak{L}$
where $\mathfrak{L}$ is the Lagrangian.
\end{fact}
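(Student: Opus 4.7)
The plan is to treat the two directions separately; the forward implication is essentially a pullback computation, while the converse needs an explicit construction.

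For the forward direction (embedding $\Rightarrow$ strongly adapted 1-form), I would simply pull back the symplectic potential. Given $\phi:M\to T^{*}\mathbb{R}^{n}$ with $d\phi\cdot X = X_{\mathrm{well}}$, set $\theta := \phi^{*}\Theta$ where $\Theta = p_{i}\,dq^{i}$. Then
\[
\theta\cdot X \;=\; \Theta(d\phi\cdot X) \;=\; \Theta(p,-\nabla V) \;=\; |p|^{2}\ \geq\ 0.
\]
Nonsingularity of $X$ forces $d\phi\cdot X\neq 0$; arranging the embedding to avoid the zero section of $T^{*}\mathbb{R}^{n}$ (e.g.\ by a generic perturbation, since the zero section has codimension $n$) then upgrades this to $\theta\cdot X>0$. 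Naturality of the Lie derivative under pullback gives $\mathcal{L}_{X}\theta = \phi^{*}(\mathcal{L}_{X_{\mathrm{well}}}\Theta) = \phi^{*}(d\mathfrak{L}) = d(\mathfrak{L}\circ\phi)$, which is exact; Cartan's formula together with the exactness of $d(\theta\cdot X)$ then gives $\iota_{X}d\theta$ exact as well. This also produces the stated correspondences $\theta\cdot X\leftrightarrow|p|^{2}$ and $\mathcal{L}_{X}\theta\leftrightarrow d\mathfrak{L}$.

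For the converse, the plan is to reverse-engineer $n$, an embedding $q:M\hookrightarrow\mathbb{R}^{n}$, and a potential $V:\mathbb{R}^{n}\to\mathbb{R}$ from the data $(M,X,\theta)$. The guiding observation from the forward direction is that any embedding $\phi(x)=(q(x),p(x))$ into a well must satisfy $p^{i}=X(q^{i})$ (from $dq\cdot X=p$) and hence $\theta = X(q^{i})\,dq^{i}$ (from $\phi^{*}\Theta = \theta$). So I would first seek $q:M\to\mathbb{R}^{n}$ enforcing this pullback identity, starting from a Whitney embedding $M\hookrightarrow\mathbb{R}^{n_{0}}$ and augmenting with further coordinate functions tuned to $\theta$; the positivity $\theta\cdot X>0$ supplies the nondegeneracy needed in this adjustment. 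Then $\phi(x)=(q(x),X(q)(x))$ is an immersion avoiding the zero section, and the remaining flow condition $d\phi\cdot X = (p,-\nabla V(q))$ becomes the requirement that $-\partial V/\partial q^{i}\circ q = X(Xq^{i})$ along $q(M)$. The exactness of $\iota_{X}d\theta$ is precisely the integrability condition ensuring this prescribed vector field along $q(M)$ is the restriction of a true gradient, and a Whitney extension argument then delivers $V$ globally on $\mathbb{R}^{n}$.

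The main obstacle is the middle step of the converse: simultaneously enforcing $\theta = X(q^{i})\,dq^{i}$ and keeping $q(M)$ tame enough that $-\nabla V$ can be extended smoothly from $q(M)$ to all of $\mathbb{R}^{n}$. If $n$ is too small one cannot enforce the pullback identity; if $n$ is large one has plenty of slack but must still coordinate the choice with the integrability condition. This is where both hypotheses are genuinely used in tandem: $\theta\cdot X>0$ prevents the candidate momentum from degenerating, while exactness of $\iota_{X}d\theta$ supplies the gradient structure needed for $V$ to exist. I would therefore expect the bulk of the work in Tao's construction to be precisely this joint construction of $q$ and $V$, with the rest of the argument consisting of unpacking the pullback formalism already sketched above.
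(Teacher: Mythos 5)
This Fact is cited from Tao's paper and is not reproved here, but the paper's own proofs of the analogous homogeneous results (\lemref{1st_tech_lem}, \thmref{NLW_embed}, \thmref{WellHomo}) reproduce Tao's mechanism explicitly, so a comparison is still meaningful.

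Your forward direction is essentially the right argument: pull back the symplectic potential $\Theta=p_{i}\,dq^{i}$, and naturality of $d$ and $\mathcal{L}$ immediately gives the exactness statement and the two stated correspondences. However the ``generic perturbation to avoid the zero section'' step does not work: a perturbation of $\phi$ that moves it off $\{p=0\}$ will in general destroy the flow equation $d\phi\cdot X=(p,-\nabla V(q))$, which pins $\phi$ up to time-reparametrization. What actually removes the issue is that in this framework the embedding into $\mathrm{Well}$ is understood to have $|p|>0$ on the image (compare the explicit requirement $|Q|>0$ in the definition of embeddings into $\mathrm{HomWell}$); absent that, the pullback is only weakly adapted.

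For the converse you have the right high-level shape (construct $q$ with $\theta=\langle Xq,\,dq\rangle$, set $p=Xq$, then manufacture $V$ from the exactness of $\mathcal{L}_{X}\theta$), but you misidentify the key tool. The identity $\theta(W)=\langle Xq,Wq\rangle$ is not something you get by ``Whitney plus augmentation''; it is exactly the statement that $q$ is a \emph{Riemannian isometric} embedding for a metric $g$ with $g(X,\cdot)=\theta$, and such a $g$ exists precisely because $\theta\cdot X>0$. The ingredient is therefore Nash's isometric embedding theorem, not Whitney's theorem, and this is what the paper's proofs of \thmref{NLW_embed} and \thmref{WellHomo} do: build $g$ with $g\cdot Y=\theta$, Nash-embed to get $Q$ with $\langle\partial_{\alpha}Q,\partial_{\beta}Q\rangle=g_{\alpha\beta}$, and read off $\theta(W)=g(Y,W)=\langle YQ,WQ\rangle$. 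Once $Q$ is isometric, exactness of $\mathcal{L}_{X}\theta$ is indeed the integrability condition that lets one define the restricted potential on $\mathrm{Im}\,Q$ and extend it globally, but the extension step also needs a tubular-neighborhood/partition-of-unity argument (as carried out in \thmref{NLW_embed}) rather than a bare Whitney extension, since one must control $\nabla V$, not just $V$, off the image. So your sketch circles the correct argument but leaves out the Nash step, which is the load-bearing idea in both Tao's original proof and this paper's analogues.
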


\begin{defn}
If the flow $\text{Well}(\mathbb{R}^{n},V)$ further satisfies that
$V$ is homogeneous of order $k$ (away from the origin), i.e. 
\[
V(tx)=t^{k}V(x)\;\forall t\geq1,\forall x\in\mathbb{R}^{n}\setminus B(0,\epsilon)
\]
 for some $\epsilon\in(0,1)$, then we denote the flow as $\text{HomWell}_{k}(\mathbb{R}^{n},V)$. 

An embedding $\phi:N\to\mathbb{R}^{n}\times\mathbb{R}^{n},a\mapsto(Q(a),P(a))$
from $(N,Y)$ into $\text{HomWell}_{k}(\mathbb{R}^{n},V)$ is understood
to satisfy $|Q|>0$ and $\mathrm{Im}Q$ lies in the region where $V$
is homogeneous. The embedding is also called \emph{spherical} if $Q(N)\subseteq cS^{n-1}$
for some $c>0$.

Similarly for $\mathrm{NLW}(\mathbb{T}^{d},\mathbb{R}^{m},V)$, if
$V$ is homogeneous of order $k$, we denote it as $\mathrm{HomNLW_{k}}(\mathbb{T}^{d},\mathbb{R}^{m},V)$. 
\end{defn}

\begin{fact}[Euler's homogeneous function theorem]
 A smooth potential $V$ is homogeneous of order $k$ away from the
origin if and only if 
\begin{equation}
\left\langle \nabla V(x),x\right\rangle =kV(x)\;\forall x\in\mathbb{R}^{n}\setminus B(0,\epsilon)\label{eq:euler_Cond}
\end{equation}
 for some $\epsilon>0$ .
\end{fact}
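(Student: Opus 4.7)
The plan is to verify both directions of this classical identity by an elementary differentiation argument paired with a first-order linear ODE.

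For the forward direction, I would assume $V(tx) = t^k V(x)$ for all $t\geq 1$ and all $x\in\mathbb{R}^n\setminus B(0,\epsilon)$, then differentiate both sides with respect to $t$ and evaluate at $t=1^+$. The chain rule gives $\langle\nabla V(x),x\rangle$ on the left and $kV(x)$ on the right, which is precisely \eqref{euler_Cond} on the same domain.

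For the reverse direction, I would fix $x\in\mathbb{R}^n\setminus B(0,\epsilon)$ and define $f(t)=V(tx)$ for $t\geq 1$. Because $|tx|\geq|x|\geq\epsilon$, the hypothesis \eqref{euler_Cond} applies at the point $tx$, yielding $\langle\nabla V(tx),tx\rangle=kV(tx)$. Since $f'(t)=\langle\nabla V(tx),x\rangle$, multiplying by $t$ converts this into the linear ODE
\[
tf'(t)=kf(t),\qquad f(1)=V(x),
\]
equivalently $\bigl(t^{-k}f(t)\bigr)'=0$, whose unique solution is $f(t)=V(x)t^k$. This is precisely the homogeneity identity.

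There is no serious obstacle; the only bookkeeping point worth flagging is the radius $\epsilon$. In the forward direction the same $\epsilon$ from the homogeneity hypothesis is inherited by the Euler condition. In the reverse direction, the set $\mathbb{R}^n\setminus B(0,\epsilon)$ is invariant under the dilation $x\mapsto tx$ for $t\geq 1$, which is exactly what makes the ODE argument go through; if necessary one shrinks $\epsilon$ to land in the interval $(0,1)$ required by the definition of homogeneity.
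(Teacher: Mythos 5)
The paper records this as a classical \emph{Fact} with no proof supplied, so there is no in-text argument to compare against. Your two-directional argument is the standard one and is correct: the forward direction by differentiating $V(tx)=t^kV(x)$ in $t$ at $t=1^+$, and the reverse direction by fixing $x$ outside $B(0,\epsilon)$, setting $f(t)=V(tx)$, noting that the Euler identity at the point $tx$ gives $tf'(t)=kf(t)$ with $f(1)=V(x)$, and integrating $\bigl(t^{-k}f(t)\bigr)'=0$. The observation that $\mathbb{R}^n\setminus B(0,\epsilon)$ is invariant under dilations $t\geq 1$ is exactly what keeps the ODE on the domain where the hypothesis applies.

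One small caution on your final bookkeeping remark: you cannot literally \emph{shrink} the $\epsilon$ appearing in \eqref{euler_Cond}, since the hypothesis says nothing about $V$ on the annulus between the smaller and larger radii. What the reverse direction actually establishes is homogeneity on $\mathbb{R}^n\setminus B(0,\epsilon)$ for whatever $\epsilon$ makes \eqref{euler_Cond} hold. The discrepancy between the Fact's ``$\epsilon>0$'' and the Definition's ``$\epsilon\in(0,1)$'' is an imprecision in the paper's statement rather than a gap in your argument; if the same $\epsilon$ is read on both sides, your proof is complete as written.
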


\section{Embedding into HomNLW}

First, we need a technical lemma to characterize the Euler condition
in our setting:
\begin{lem}
\label{lem:1st_tech_lem}Let $(N,Y)$ be a compact nonsingular flow
and $k\in\mathbb{R}$. Let $Y$ be extended to $(Y,0)\in\mathfrak{X}(N\times\mathbb{T}^{d})$.

Let $Q:N\times\mathbb{T}^{d}\to\mathbb{R}^{m}$ be smooth, and $\theta\in\Omega^{1}\left(N\times\mathbb{T}^{d}\right)$
be such that $\theta\cdot W=\left\langle YQ,WQ\right\rangle $ $\forall$$W\in\mathfrak{X}(N\times\mathbb{T}^{d})$.

Then $\forall W\in\mathfrak{X}(N\times\mathbb{T}^{d})$:
\begin{align}
 & W\left\langle -YYQ+\Delta_{x}Q,Q\right\rangle =k\left\langle -YYQ+\Delta_{x}Q,WQ\right\rangle \label{eq:first_crit}\\
\iff & k\mathcal{L}_{Y}\theta\cdot W+W\left(\left(1-\frac{k}{2}\right)\left(\theta\cdot Y\right)-YY\left(\frac{|Q|^{2}}{2}\right)-\left(1-\frac{k}{2}\right)\left|\nabla_{x}Q\right|^{2}+\Delta_{x}\left(\frac{|Q|^{2}}{2}\right)\right)\nonumber \\
 & \;\;\;\;\;=k\sum_{i}\partial_{x^{i}}\left\langle \partial_{x^{i}}Q,WQ\right\rangle -k\sum_{i}\left\langle \partial_{x^{i}}Q,\left[\partial_{x_{i}},W\right]Q\right\rangle \label{eq:have_to_satisfy}
\end{align}
\end{lem}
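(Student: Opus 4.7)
The plan is a direct expansion-and-collect computation: I would rewrite both sides of (\ref{eq:first_crit}) using the Leibniz rule until every term appears in the canonical form of (\ref{eq:have_to_satisfy}). After moving (\ref{eq:first_crit}) to one side I will split the $YY$-part from the $\Delta_x$-part and handle them independently.

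For the $Y$-part, I would use $\langle YQ,Q\rangle = \tfrac12 Y|Q|^2$ together with the Leibniz rule to obtain
\[
\langle YYQ, Q\rangle = YY(|Q|^2/2) - |YQ|^2 = YY(|Q|^2/2) - (\theta\cdot Y).
\]
For the mixed term I would compute $(\mathcal{L}_Y\theta)\cdot W = Y(\theta\cdot W) - \theta\cdot[Y,W]$ and expand with $\theta\cdot W = \langle YQ, WQ\rangle$; two of the four resulting terms cancel and the other two produce
\[
(\mathcal{L}_Y\theta)\cdot W = \langle YYQ, WQ\rangle + \tfrac12 W(\theta\cdot Y),
\]
which lets me trade $\langle YYQ, WQ\rangle$ for $(\mathcal{L}_Y\theta)\cdot W$ plus a lower-order $W(\theta\cdot Y)$ correction.

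For the $\Delta_x$-part, the analogous trick applies coordinatewise. From $\langle\partial_{x^i}Q, Q\rangle = \tfrac12 \partial_{x^i}|Q|^2$ I will deduce $\langle\Delta_x Q, Q\rangle = \Delta_x(|Q|^2/2) - |\nabla_x Q|^2$; and applying the Leibniz rule to $\partial_{x^i}\langle\partial_{x^i}Q, WQ\rangle$, combined with $\partial_{x^i}(WQ) = W(\partial_{x^i}Q) + [\partial_{x_i},W]Q$ and $\langle\partial_{x^i}Q, W(\partial_{x^i}Q)\rangle = \tfrac12 W|\partial_{x^i}Q|^2$, I expect to obtain
\[
\langle\Delta_x Q, WQ\rangle = \sum_i \partial_{x^i}\langle\partial_{x^i}Q, WQ\rangle - \tfrac12 W|\nabla_x Q|^2 - \sum_i \langle\partial_{x^i}Q, [\partial_{x_i},W]Q\rangle,
\]
which is precisely the combination appearing on the right-hand side of (\ref{eq:have_to_satisfy}).

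Substituting these four identities into the split form of (\ref{eq:first_crit}) and regrouping should reproduce (\ref{eq:have_to_satisfy}) on the nose: the $W(\theta\cdot Y)$ contributions will combine to $(1-k/2)W(\theta\cdot Y)$ (a $+1$ from $\langle YYQ, Q\rangle$ and a $-k/2$ from the $(\mathcal{L}_Y\theta)\cdot W$ substitution), and symmetrically the $W|\nabla_x Q|^2$ contributions will combine to $-(1-k/2)W|\nabla_x Q|^2$. Since every manipulation is reversible, the equivalence follows. The hard part is not conceptual but purely bookkeeping: one has to track the splitting $k = (1-k/2) + k/2$ appearing in two independent places and keep the signs straight when dualizing $\langle YYQ, Q\rangle$ versus $\langle YYQ, WQ\rangle$, then verify that the $(1-k/2)$ coefficients in front of $\theta\cdot Y$ and $|\nabla_x Q|^2$ indeed match the statement.
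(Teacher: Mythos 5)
Your proposal is correct and is essentially the paper's own argument: the same Leibniz expansions of $\langle YYQ,Q\rangle$, $\langle YYQ,WQ\rangle$, $\langle\Delta_xQ,Q\rangle$ and $\langle\Delta_xQ,WQ\rangle$, with the trade of $\langle YYQ,WQ\rangle$ for $\mathcal{L}_Y\theta\cdot W$ via $Y(\theta\cdot W)=\mathcal{L}_Y\theta\cdot W+\theta\cdot[Y,W]$ (which you merely package as the single identity $\mathcal{L}_Y\theta\cdot W=\langle YYQ,WQ\rangle+\tfrac12 W(\theta\cdot Y)$), and the resulting coefficients $(1-k/2)$ on $\theta\cdot Y$ and $|\nabla_xQ|^2$ check out. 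All steps are reversible, so the equivalence holds as claimed.
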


\begin{proof}
Observe that:
\begin{align*}
(\ref{eq:first_crit})\iff & W\left(\left|YQ\right|^{2}-Y\left\langle YQ,Q\right\rangle -\left|\nabla_{x}Q\right|^{2}+\sum_{i}\partial_{x^{i}}\left\langle Q,\partial_{x^{i}}Q\right\rangle \right)\\
 & \;\;\;\;\;=k\left(-Y\left\langle YQ,WQ\right\rangle +\left\langle YQ,[Y,W]Q\right\rangle +W\left(\frac{|YQ|^{2}}{2}\right)+\sum_{i}\partial_{x^{i}}\left\langle \partial_{x^{i}}Q,WQ\right\rangle \right.\\
 & \;\;\;\;\;\;\;\;\;\;\left.-\sum_{i}\left\langle \partial_{x^{i}}Q,\left[\partial_{x_{i}},W\right]Q\right\rangle -W\left(\frac{\left|\nabla_{x}Q\right|^{2}}{2}\right)\right)\\
\iff & W\left(\left(1-\frac{k}{2}\right)\left|YQ\right|^{2}-YY\left(\frac{|Q|^{2}}{2}\right)-\left(1-\frac{k}{2}\right)\left|\nabla_{x}Q\right|^{2}+\Delta_{x}\left(\frac{|Q|^{2}}{2}\right)\right)\\
 & \;\;\;\;\;=-kY(\theta\cdot W)+k\theta\cdot\left[Y,W\right]+k\left(\sum_{i}\partial_{x^{i}}\left\langle \partial_{x^{i}}Q,WQ\right\rangle -\left\langle \partial_{x^{i}}Q,\left[\partial_{x_{i}},W\right]Q\right\rangle \right)
\end{align*}
We are done as $Y(\theta\cdot W)=\mathcal{L}_{Y}\theta\cdot W+\theta\cdot\left[Y,W\right]$.
\end{proof}
\begin{rem}
To see how this lemma is used, let us assume there is an embedding
$y\mapsto\left(Q\left(y,\cdot\right),P\left(y,\cdot\right)\right)$
from $(N,Y)$ into $\mathrm{HomNLW_{k}}(\mathbb{T}^{d},\mathbb{R}^{m},V)$
for some $V$ and some $m$ (with $P=YQ,$ $YP=\Delta_{x}Q-\nabla_{\mathbb{R}^{m}}V(Q)$).
Then define $\widetilde{\theta}\in\Omega^{1}\left(N\times\mathbb{T}^{d}\right)$
and $\theta\in\Omega^{1}\left(N\right)$ by
\begin{align*}
\widetilde{\theta}\cdot W & =\left\langle YQ,WQ\right\rangle \;\forall W\in\mathfrak{X}\left(N\times\mathbb{T}^{d}\right),\\
\theta\cdot Z & =\int_{\mathbb{T}^{d}}\left\langle YQ,ZQ\right\rangle \;\mathrm{d}x\;\forall Z\in\mathfrak{X}\left(N\right)
\end{align*}
 We note that we have implicity extended $Z$ to $\left(Z,0\right)\in\mathfrak{X}\left(N\times\mathbb{T}^{d}\right)$.
This implies 
\begin{align}
\theta\cdot Z & =\int_{\mathbb{T}^{d}}\widetilde{\theta}\cdot Z\;\mathrm{d}x\;\forall Z\in\mathfrak{X}\left(N\right),\nonumber \\
\mathcal{L}_{Y}\theta\cdot Z & =Y\left(\theta\cdot Z\right)-\theta\cdot\left[Y,Z\right]\nonumber \\
 & =\int_{\mathbb{T}^{d}}Y\left(\widetilde{\theta}\cdot Z\right)-\widetilde{\theta}\cdot\left[Y,Z\right]\;\mathrm{d}x\nonumber \\
 & =\int_{\mathbb{T}^{d}}\mathcal{L}_{Y}\widetilde{\theta}\cdot Z\;\mathrm{d}x\label{eq:LYZ}
\end{align}
On the other hand, we have 
\begin{align*}
\mathcal{L}_{Y}\theta\cdot Z & =\int_{\mathbb{T}^{d}}Y\left\langle YQ,ZQ\right\rangle -\left\langle YQ,\left[Y,Z\right]Q\right\rangle \;\mathrm{d}x\\
 & =\int_{\mathbb{T}^{d}}\left\langle YYQ,ZQ\right\rangle +Z\left(\frac{1}{2}\left|YQ\right|^{2}\right)\;\mathrm{d}x\\
 & =\int_{\mathbb{T}^{d}}\left\langle \Delta_{x}Q-\left(\nabla_{\mathbb{R}^{m}}V\right)\circ Q,ZQ\right\rangle +Z\left(\frac{1}{2}\left|YQ\right|^{2}\right)\;\mathrm{d}x\\
 & =Z\int_{\mathbb{T}^{d}}\left(-\frac{1}{2}\left|\nabla_{x}Q\right|^{2}-V\circ Q+\frac{1}{2}\left|YQ\right|^{2}\right)\;\mathrm{d}x
\end{align*}
where we have used the fact that $\left[\partial_{x^{i}},Z\right]=0$
as $Z\in\mathfrak{X}\left(N\right)$. Then there is $L$ smooth on
$N$ such that $\mathcal{L}_{Y}\theta=dL$.

From (\ref{eq:euler_Cond}), for any $W\in\mathfrak{X}(N\times\mathbb{T}^{d})$:
\begin{align}
 & W\left\langle \left(\nabla_{\mathbb{R}^{m}}V\right)\circ Q,Q\right\rangle =W\left(kV\circ Q\right)\nonumber \\
\iff & W\left\langle \left(\nabla_{\mathbb{R}^{m}}V\right)\circ Q,Q\right\rangle =k\left\langle \left(\nabla_{\mathbb{R}^{m}}V\right)\circ Q,WQ\right\rangle \nonumber \\
\iff & W\left\langle -YYQ+\Delta_{x}Q,Q\right\rangle =k\left\langle -YYQ+\Delta_{x}Q,WQ\right\rangle \nonumber \\
\iff & k\mathcal{L}_{Y}\widetilde{\theta}\cdot W+W\left(\left(1-\frac{k}{2}\right)\left(\widetilde{\theta}\cdot Y\right)-YY\left(\frac{|Q|^{2}}{2}\right)-\left(1-\frac{k}{2}\right)\left|\nabla_{x}Q\right|^{2}+\Delta_{x}\left(\frac{|Q|^{2}}{2}\right)\right)\nonumber \\
 & \;\;\;\;\;=k\sum_{i}\partial_{x^{i}}\left\langle \partial_{x^{i}}Q,WQ\right\rangle -k\sum_{i}\left\langle \partial_{x^{i}}Q,\left[\partial_{x_{i}},W\right]Q\right\rangle \label{eq:euler_imply}
\end{align}
where we have used \Lemref{1st_tech_lem} to pass to the last equality.

Let $N$ be locally parametrized by functions $(y^{j})$. Letting
$W=\partial_{y^{j}}$ locally (therefore a smooth vector field on
an open set of $N$), by (\ref{eq:LYZ}), we have
\[
\int_{\mathbb{T}^{d}}\mathcal{L}_{Y}\widetilde{\theta}\cdot W\;\mathrm{d}x=\mathcal{L}_{Y}\theta\cdot W=WL
\]
 Then, with $W=\partial_{y^{j}}$, by integrating in $x$, (\ref{eq:euler_imply})
implies

\begin{equation}
kL+\left(1-\frac{k}{2}\right)\left(\theta\cdot Y\right)-YY\left(\int_{\mathbb{T}^{d}}\frac{|Q|^{2}}{2}\;\mathrm{d}x\right)-\left(1-\frac{k}{2}\right)\left(\int_{\mathbb{T}^{d}}\left|\nabla_{x}Q\right|^{2}\mathrm{d}x\right)=\text{const on }N\label{eq:necessary}
\end{equation}
\end{rem}

This is the main obstruction to our embedding. 

It turns out that when $k\notin\{0,2\}$, it can be trivially satisfied,
and any compact nonsingular flow that can be embedded into $\mathrm{NLW}$
can also be embedded into $\mathrm{HomNLW}_{k}$.

\begin{thm}[Homogeneous NLW embedding]
\label{thm:NLW_embed} Let $k\notin\{0,2\},\;d\in\mathbb{N}$. Let
$(N,Y)$ be a compact nonsingular flow with a strongly adapted 1-form
$\theta$. Then there is an embedding from $(N,Y)$ into $\mathrm{HomNLW_{k}}(\mathbb{T}^{d},\mathbb{R}^{m},V)$
for some $V$ and some $m$. 
\end{thm}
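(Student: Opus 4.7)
The plan is to begin with an NLW embedding guaranteed by the existence of $\theta$, and then enrich it until the resulting potential can be chosen homogeneous of degree $k$.

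First, the Fact on strongly adapted 1-forms applied to $\theta$ yields an embedding $\phi_{0}:(N,Y)\hookrightarrow\mathrm{Well}(\mathbb{R}^{m_{0}},V_{0})$ for some $m_{0}\in\mathbb{N}$ and some smooth $V_{0}$. Composing with the natural inclusion $\mathrm{Well}(\mathbb{R}^{m_{0}},V_{0})\hookrightarrow\mathrm{NLW}(\mathbb{T}^{d},\mathbb{R}^{m_{0}},V_{0})$ as functions constant in $x$, I view this as an NLW embedding $y\mapsto(Q_{0}(y),P_{0}(y))$ with $P_{0}=YQ_{0}$.

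Next I augment by appending extra components $S(y,x):N\times\mathbb{T}^{d}\to\mathbb{R}^{l}$ to form $Q(y,x)=(Q_{0}(y),S(y,x))\in\mathbb{R}^{m_{0}+l}$. The goal is to choose $S$ so that (i) $|Q|^{2}\equiv R^{2}$ on the image, placing $Q(N\times\mathbb{T}^{d})$ on the sphere $R\mathbb{S}^{m_{0}+l-1}$, and (ii) the scalar identity \eqref{necessary} from the preceding remark holds. For (i), include a component of the form $\sqrt{R^{2}-|Q_{0}|^{2}-|S'|^{2}}$ with $R$ chosen large enough that the radicand stays positive on the compact $N$. For (ii), observe that the hypothesis $k\neq2$ makes the coefficient $1-k/2$ of $\int|\nabla_{x}Q|^{2}\,dx$ in \eqref{necessary} nonzero, so tuning the Laplacian-eigenmode content of $S(y,\cdot)$ on $\mathbb{T}^{d}$ (with amplitudes depending on $y$) gives enough freedom to make \eqref{necessary} into an identity.

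Once the image lies on the sphere of radius $R$, the sphere constraint yields $\langle Q,\Delta_{x}Q-YYQ\rangle=|YQ|^{2}-|\nabla_{x}Q|^{2}$, and Euler's identity $\langle\nabla V(Q),Q\rangle=kV(Q)$ then forces the value $V(Q)=(|YQ|^{2}-|\nabla_{x}Q|^{2})/k$ on the image. I would extend $V$ smoothly to the rest of the sphere via partitions of unity and then homogeneously to $\mathbb{R}^{m_{0}+l}\setminus B(0,\epsilon)$ by $V(t\omega)=t^{k}V(\omega)$ for $\omega\in R\mathbb{S}^{m_{0}+l-1}$ and $t\geq\epsilon/R$; the hypothesis $k\neq0$ makes this extension unambiguous.

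The main obstacle will be checking that $\Delta_{x}Q-YYQ$ equals $\nabla V(Q)$ \emph{pointwise} on the image, and not merely in a radial or integrated sense. The radial components match automatically from the sphere-homogeneity compatibility. For the tangential matching, \Lemref{1st_tech_lem} is the key tool: it reduces the pointwise Euler identity \eqref{first_crit} to the pointwise \eqref{have_to_satisfy} for every $W\in\mathfrak{X}(N\times\mathbb{T}^{d})$, which is genuinely stronger than the integrated condition \eqref{necessary}. This is where the explicit Laplacian-eigenmode structure of $S$ must be used carefully, and where the exclusion of $k\in\{0,2\}$ ensures the resulting system of algebraic conditions on $S$ is non-degenerate.
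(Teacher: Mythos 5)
Your endgame (reading off $V$ on the image from Euler's identity, extending over the sphere, then extending homogeneously) matches the paper's, and you correctly identify that the real issue is the pointwise identity \eqref{first_crit} rather than the integrated condition \eqref{necessary}. But that is precisely the step you leave unresolved, and your proposed mechanism cannot deliver it. Tuning the ``Laplacian-eigenmode content'' of appended components $S(y,x)$ only controls averaged quantities such as $\int_{\mathbb{T}^d}|\nabla_x Q|^2\,\mathrm{d}x$; the right-hand side of \eqref{have_to_satisfy}, namely $k\sum_i\partial_{x^i}\langle\partial_{x^i}Q,WQ\rangle$ for $W=\partial_{y^j}$ and $W=\partial_{x^j}$, is a genuine function of $x$ for a generic eigenmode ansatz, and the left-hand side must match it pointwise. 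The paper's resolution is structural, not perturbative: one extends the metric $g$ (with $g\cdot Y=\theta$) to $N\times\mathbb{T}^d$ by $g(\partial_{x^i},\partial_{x^j})=\delta_{ij}G/d$ with $G=G(y)$ chosen (here is where $k\neq2$ enters) to solve the scalar equation \eqref{EulersetG}, then produces $Q=RS$ with $S$ a Nash isometric embedding of the auxiliary metric $h_{\alpha\beta}=(g_{\alpha\beta}-\partial_\alpha R\,\partial_\beta R)/R^2$ into a sphere, so that $Q$ is an isometric embedding of $(N\times\mathbb{T}^d,g)$. Then $\langle\partial_{x^i}Q,WQ\rangle=g(\partial_{x^i},W)=dx^i(W)\,G/d$ is independent of $x$, the right-hand side of \eqref{have_to_satisfy} vanishes identically for coordinate fields $W$, and the left-hand side reduces to $W$ applied to the constant in \eqref{EulersetG}. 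Your ansatz $Q=(Q_0(y),S(y,x))$ gives no control over the cross inner products $\langle\partial_{x^i}Q,\partial_{y^j}Q\rangle$ or over the $x$-dependence of $\langle\partial_{x^i}Q,\partial_{x^j}Q\rangle$, so there is no reason the pointwise condition should hold; moreover, once you append components, the form $W\mapsto\langle YQ,WQ\rangle$ is no longer your original $\theta$, so the initial $\mathrm{Well}$ embedding buys you essentially nothing.

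A second, smaller gap: prescribing the \emph{value} of $V$ on $\mathrm{Im}\,Q$ and extending ``smoothly via partitions of unity'' determines only the component of $\nabla_{\mathbb{R}^m}V(Q)$ tangent to $\mathrm{Im}\,Q$ (plus the radial component via homogeneity). For the flow equation you need the full vector identity $\nabla_{\mathbb{R}^m}V(Q)=A:=-YYQ+\Delta_xQ$, and the components of $A$ normal to $\mathrm{Im}\,S$ inside $S^{m-1}$ are extra data that must be imposed on the first-order jet of the extension; the paper does this explicitly by setting $V_1(a,b)=V_1(a)+\langle F(a),b\rangle$ in adapted coordinates before applying a partition of unity. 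Finally, $k\neq0$ is not needed to make the homogeneous extension ``unambiguous'' (that holds for any $k$); it is needed to recover $V$ on the image from $\langle\nabla V(Q),Q\rangle=kV(Q)$ in the first place, and $k\neq 2$ plays no role in any ``non-degeneracy of algebraic conditions on $S$''--- its only role is solvability of \eqref{EulersetG} for $G$.
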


\begin{proof}
As in \cite{taoUniversalityPotentialWell2017}, since $\theta\cdot Y>0$,
we can construct a Riemannian metric $g$ on $N$ such that $g\cdot Y=\theta$.
Let $L$ be smooth on $N$ such that $dL=\mathcal{L}_{Y}(\theta)$. 

Any smooth function $f$ on $N$ can be extended to a smooth function
$f(y,x)=f(y)$ on $N\times\mathbb{T}^{d}$. Any vector field $Z\in\mathfrak{X}N$
can be extended to $(Z,0)\in\mathfrak{X}(N\times\mathbb{T}^{d})$.
Similarly for 1-forms. So under these extensions, we can say $Y\in\mathfrak{X}(N\times\mathbb{T}^{d})$,
$\theta\in\Omega^{1}(N\times\mathbb{T}^{d})$ and $dL=\mathcal{L}_{Y}(\theta)$.
Let $N$ be locally parametrized by functions $(y^{j})$, and $\mathbb{T}^{d}$
globally parametrized by $(x^{i})$. Since $Y$ is nonvanishing, by
straightening, WLOG assume $[Y,\partial_{y^{j}}]=0\;\forall j.$ 

We wish to extend $g$ to a Riemannian metric on $N\times\mathbb{T}^{d}$
and satisfy (\ref{eq:have_to_satisfy}) later.

As the first step, we let $R$ be any smooth function on $N$ such
that $R>0$ (then extend to $R(y,x)=R(y)$). Because $k\neq2$, we
can define $G$ on $N\times\mathbb{T}^{d}$ such that 
\begin{equation}
kL+\left(1-\frac{k}{2}\right)\left(\theta\cdot Y\right)-YY\left(\frac{R^{2}}{2}\right)-\left(1-\frac{k}{2}\right)G+\Delta_{x}\left(\frac{R^{2}}{2}\right)=\text{const}\label{eq:EulersetG}
\end{equation}
where the constant is chosen so that $G>0.$ Then $G$ is constant
in $x$ and we define $g(\partial_{y^{j}},\partial_{x^{i}})=0$, $g(\partial_{x^{i}},\partial_{x^{j}})=\delta_{ij}\frac{G}{d}$.
So 
\[
\forall W\in\mathfrak{X}(N\times\mathbb{T}^{d}):g(\partial_{x^{i}},W)=dx^{i}(W)\frac{G}{d}.
\]

We observe that $G=\sum_{i=1}^{d}g(\partial_{x^{i}},\partial_{x^{i}})$
and $g(\partial_{x^{i}},\partial_{x^{i}})>0\;\forall i=\overline{1,d}$.
Then $g$ is now a Riemannian metric on $N\times\mathbb{T}^{d}$ and
we still have $g\cdot Y=\theta$. We can also check that, if $W$
is $\partial_{y^{j}}$ or $\partial_{x^{j}}$:
\begin{align}
 & W\left(kL+\left(1-\frac{k}{2}\right)\left(\theta\cdot Y\right)-YY\left(\frac{R^{2}}{2}\right)-\left(1-\frac{k}{2}\right)G+\Delta_{x}\left(\frac{R^{2}}{2}\right)\right)\label{eq:okay_step}\\
 & \;\;\;\;\;=\sum_{i=1}^{d}k\partial_{x^{i}}g\left(\partial_{x^{i},}W\right)=0\nonumber 
\end{align}

Our goal is to have $Q(y,x)=R(y,x)S(y,x)$ where $R^{2}$ is sufficiently
large and $S:N\times\mathbb{T}^{d}\to S^{m-1}$ is a map given by
some Nash embedding. 

To this end, we introduce another Riemannian metric on $N\times\mathbb{T}^{d}$:
let $h_{\alpha\beta}=\frac{g_{\alpha\beta}-(\partial_{\alpha}R)(\partial_{\beta}R)}{R^{2}}$.
Then $h$ is positive definite iff $\left(g_{\alpha\beta}-(\partial_{\alpha}R)(\partial_{\beta}R)\right)_{\alpha,\beta}>0$.
We now observe that $g_{\alpha\beta}-(\partial_{\alpha}R)(\partial_{\beta}R)=g_{\alpha\beta}-\frac{(\partial_{\alpha}R^{2})(\partial_{\beta}R^{2})}{4R^{2}}$.
As we can add a positive constant to $R^{2}$ and $g>0$, WLOG $h$
is a Riemannian metric on $N\times\mathbb{T}^{d}$. 

Then by Nash embedding, and the fact that any compact regular submanifold
of $\mathbb{R}^{l}$ (with the usual Euclidean metric) can be embedded
into $cS^{2l}$ for some $c>0$,%
{} we can rescale $g,h$ and $\theta$ to get an isometric embedding
$S:(N\times\mathbb{T}^{d},h)\to S^{m-1}$ for some $m$. 

Then $\left\langle \partial_{\alpha}S,\partial_{\beta}S\right\rangle =h_{\alpha\beta}=\frac{g_{\alpha\beta}-(\partial_{\alpha}R)(\partial_{\beta}R)}{R^{2}}$. 

Let $Q=RS$. Then $\left\langle \partial_{\alpha}Q,\partial_{\beta}Q\right\rangle =\left(\partial_{\alpha}R\right)\left(\partial_{\beta}R\right)+R^{2}\left\langle \partial_{\alpha}S,\partial_{\beta}S\right\rangle =g_{\alpha\beta}$
and $Q:(N\times\mathbb{T}^{d},g)\to\mathbb{R}^{m}$ is also an isometric
embedding. This means for any $W\in\mathfrak{X}(N\times\mathbb{T}^{d})$
we have $\theta(W)=g(Y,W)=\left\langle YQ,WQ\right\rangle $.

Let $W$ be $\partial_{y^{j}}$ or $\partial_{x^{j}}$ for some $j,$
then $W,Y,\partial_{x^{i}}$ commute. By (\ref{eq:okay_step}) and
\Lemref{1st_tech_lem} we have:
\begin{align}
 & W\left(kL+\left(1-\frac{k}{2}\right)\left(\theta\cdot Y\right)-YY\left(\frac{|Q|^{2}}{2}\right)-\left(1-\frac{k}{2}\right)\left|\nabla_{x}Q\right|^{2}+\Delta_{x}\left(\frac{|Q|^{2}}{2}\right)\right)\nonumber \\
 & \;\;\;\;\;=k\sum_{i}\partial_{x^{i}}\left\langle \partial_{x^{i}}Q,WQ\right\rangle \nonumber \\
\iff & W\left\langle -YYQ+\Delta_{x}Q,Q\right\rangle =k\left\langle -YYQ+\Delta_{x}Q,WQ\right\rangle \label{eq:deriveeuler}
\end{align}

By $C^{\infty}(N\times\mathbb{T}^{d})$-linearity, we conclude that
(\ref{eq:deriveeuler}) holds true for any $W\in\mathfrak{X}(N\times\mathbb{T}^{d}).$ 

Now we construct the potential. We define $v$ on $N\times\mathbb{T}^{d}$
such that 
\begin{equation}
kv=\left\langle -YYQ+\Delta_{x}Q,Q\right\rangle \label{eq:small_v}
\end{equation}
 Note that this is where we need $k\neq0$. 

Let $V_{0}=v\circ Q^{-1}$ be the restricted potential on $\mathrm{Im}Q$.
Then (\ref{eq:deriveeuler}) gives 
\begin{equation}
k\left\langle -YYQ+\Delta_{x}Q,WQ\right\rangle =Wkv=W(kV_{0}\circ Q)=k\left\langle \nabla_{\mathrm{Im}Q}V_{0}(Q),WQ\right\rangle \label{eq:arbit_W}
\end{equation}

Let us define $A=-YYQ+\Delta_{x}Q$. As $W$ is arbitrary in (\ref{eq:arbit_W}),
we conclude $\mathrm{proj}_{T(\mathrm{Im}Q)}A=\nabla_{\mathrm{Im}Q}V_{0}(Q)$,
while (\ref{eq:small_v}) implies $\left\langle A,Q\right\rangle =kV_{0}(Q)$.
To finally recover the Euler condition, we hope to extend $V_{0}$
to $V$ such that $\nabla_{\mathbb{R}^{m}}V(Q)=A$. We first work
on the unit sphere.

Homogeneity suggests we define $V_{1}$ on $\mathrm{Im}S$ as $V_{1}(S)=\frac{1}{R^{k}}V_{0}(RS)$.
Then we have 
\begin{eqnarray*}
W(V_{1}\circ S) & = & W\left(\frac{1}{R^{k}}V_{0}\circ Q\right)=\frac{-k}{R^{k+1}}(WR)(V_{0}\circ Q)+\frac{1}{R^{k}}W(V_{0}\circ Q)\\
 & = & \frac{1}{R^{k-1}}\left(\frac{-k}{R^{2}}(WR)(V_{0}\circ Q)+\frac{1}{R}\left\langle A,WQ\right\rangle \right)\\
 & = & \frac{1}{R^{k-1}}\left(\frac{-(WR)}{R^{2}}\left\langle A,RS\right\rangle +\frac{1}{R}\left\langle A,(WR)S\right\rangle +\frac{1}{R}\left\langle A,R(WS)\right\rangle \right)\\
 & = & \left\langle \frac{A}{R^{k-1}},WS\right\rangle 
\end{eqnarray*}
This means $\left\langle \nabla_{\mathrm{Im}S}V_{1}(S),WS\right\rangle =\left\langle \frac{A}{R^{k-1}},WS\right\rangle $.
Write $B=\frac{A}{R^{k-1}}$. Then $\mathrm{proj}_{T(\mathrm{Im}S)}B=\nabla_{\mathrm{Im}S}V_{1}(S)$
and $\left\langle B,S\right\rangle =\left\langle \frac{A}{R^{k-1}},\frac{Q}{R}\right\rangle =\frac{k}{R^{k}}V_{0}(Q)=kV_{1}(S)$.

We locally parametrize $\mathrm{Im}S$, $S^{m-1}$ and $\mathbb{R}^{m}$
in a neighborhood $U$ by $(a^{i}),(a^{i},b^{j})$ and $(a^{i},b^{j},c)$
respectively since $\mathrm{Im}S\hookrightarrow S^{m-1}\hookrightarrow\mathbb{R}^{m}.$
Let $a=(a^{i}),b=(b^{j})$. WLOG assume $\{b=0,c=1\}=U\cap\mathrm{Im}S,$
$\{c=1\}=U\cap S^{m-1}$. Then in local coordinates:
\[
B(a)=\partial_{a}V_{1}(a)\partial_{a}+F(a)\partial_{b}+kV_{1}(a)\partial_{c}
\]
where $F\in C^{\infty}(\mathrm{Im}S\cap U)$. Then define $V_{1}$
on $S^{m-1}\cap U$: 
\[
V_{1}(a,b)=V_{1}(a)+\left\langle F(a),b\right\rangle 
\]
 Then $\partial_{a}V_{1}(a,0)=\partial_{a}V_{1}(a)$ and $\partial_{b}V_{1}(a,0)=F(a)$
so $\mathrm{proj}_{T(S^{m-1})}B=\nabla_{S^{m-1}}V_{1}(S)$ on $S^{-1}(U)$.
From this local result, by partition of unity, we can extend $V_{1}$
to all of $S^{m-1}$ to have $V_{1}$ smooth on $S^{m-1}$ and $\mathrm{proj}_{T(S^{m-1})}B=\nabla_{S^{m-1}}V_{1}(S)$. 

Then by homogeneity we get $V$ on $\mathbb{R}^{m}$ (except possibly
a small neighborhood near the origin). Because $\left\langle B,S\right\rangle =kV(S)=\left\langle \nabla_{\mathbb{R}^{m}}V,S\right\rangle $
and $\mathrm{proj}_{T(S^{m-1})}B=\nabla_{S^{m-1}}V_{1}(S)=\mathrm{proj}_{T(S^{m-1})}\nabla_{\mathbb{R}^{m}}V(S)$,
we conclude 
\[
B=\nabla_{\mathbb{R}^{m}}V(S)
\]
Homogeneity implies that 
\[
\nabla_{\mathbb{R}^{m}}V(Q)=\nabla_{\mathbb{R}^{m}}V(RS)=R^{k-1}\nabla_{\mathbb{R}^{m}}V(S)=R^{k-1}B=A
\]
 and we are done. 
\end{proof}
\begin{rem*}
This roughly means that, from the viewpoint of complexity, bounded
orbits of $\mathrm{NLW}$ and $\mathrm{HomNLW}_{k}$ ($k\notin\{0,2\}$)
look alike.

From the proof, we in fact have a stronger conclusion: given any function
$R$ smooth on $N\times\mathbb{T}^{d}$ such that $\partial_{x}R=0$
and $R>0,$ there is $V$ on $\mathbb{R}^{m}$ homogeneous of order
$k$ and an embedding $\hat{\phi}:N\times\mathbb{T}^{d}\to\mathbb{R}^{m}\times\mathbb{R}^{m},\;(y,x)\mapsto(Q(y,x),P(y,x))$
such that
\[
|Q|^{2}-R^{2}=\text{constant},YQ=P,\;YP=\Delta_{x}Q-\nabla_{\mathbb{R}^{m}}V(Q)
\]
\end{rem*}
\begin{rem}
\label{rem:casek=00003D2}When $k=2$, (\ref{eq:necessary}) implies
$2L-YY\left(\int_{\mathbb{T}^{d}}\frac{|Q|^{2}}{2}\right)$ is constant
on $N$. Since it is understood that $|Q|>0$, we have the following
theorem.
\end{rem}

\begin{thm}
\label{thm:homo2con}Let $d\in\mathbb{N}$ and $(N,Y)$ be a compact
nonsingular flow. There is an embedding from $(N,Y)$ into $\mathrm{HomNLW_{2}}(\mathbb{T}^{d},\mathbb{R}^{m},V)$
for some $m$ and $V$ iff there is a function $R>0$ smooth on $N$
and a strongly adapted 1-form $\theta$ such that 
\begin{equation}
d\left(YY\left(R^{2}\right)\right)-4\mathcal{L}_{Y}\theta=0\label{eq:new-condition}
\end{equation}
\end{thm}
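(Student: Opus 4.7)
The plan is to prove the two implications separately, using the remark immediately preceding the theorem for necessity, and adapting the proof of \Thmref{NLW_embed} for sufficiency.

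For the forward direction, given an embedding $y\mapsto(Q(y,\cdot),P(y,\cdot))$ into $\mathrm{HomNLW_{2}}(\mathbb{T}^{d},\mathbb{R}^{m},V)$, I would define
\[
\theta\cdot Z:=\int_{\mathbb{T}^{d}}\langle YQ,ZQ\rangle\,\mathrm{d}x\quad\text{for }Z\in\mathfrak{X}(N),\qquad R:=\left(\int_{\mathbb{T}^{d}}|Q|^{2}\,\mathrm{d}x\right)^{1/2},
\]
noting that $R>0$ is smooth on $N$ because $|Q|>0$. The computation preceding \eqref{necessary} yields a smooth $L$ on $N$ with $\mathcal{L}_{Y}\theta=dL$, and \eqref{necessary} specialised to $k=2$ reads $2L-\tfrac{1}{2}YY(R^{2})=\mathrm{const}$ on $N$. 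Taking the exterior derivative gives $4\mathcal{L}_{Y}\theta=d(YY(R^{2}))$, which is \eqref{new-condition}.

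For the backward direction, I extend the given $R$ to $N\times\mathbb{T}^{d}$ by $R(y,x):=R(y)$ and follow the proof of \Thmref{NLW_embed} up to the choice of $G$. The key observation is that when $k=2$, the coefficient $(1-k/2)$ of $G$ in \eqref{EulersetG} vanishes and, since $\Delta_{x}R=0$, that equation reduces to $2L-\tfrac{1}{2}YY(R^{2})=\mathrm{const}$ on $N$. This is exactly what \eqref{new-condition} provides (after possibly shifting $L$ by a constant on each connected component). Thus I may pick any positive constant $G$, define the metric $g$ on $N\times\mathbb{T}^{d}$ so that $g\cdot Y=\theta$ on the $N$-factor and $g(\partial_{x^{i}},\partial_{x^{j}})=\delta_{ij}G/d$, add a sufficiently large positive constant to $R^{2}$ to make $h:=(g-dR\otimes dR)/R^{2}$ positive definite (this shift preserves \eqref{new-condition} since $YY$ annihilates constants), and invoke Nash embedding to get $S:(N\times\mathbb{T}^{d},h)\hookrightarrow S^{m-1}$. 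Setting $Q=RS$ and constructing $V$ by homogeneity of order $2$ exactly as in \Thmref{NLW_embed} finishes the construction.

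The main obstacle compared to \Thmref{NLW_embed} is that the freedom to absorb the compatibility condition into $G$ is lost when $k=2$, so the compatibility requirement descends to a genuine constraint on the data $(\theta,R)$. Hypothesis \eqref{new-condition} is precisely this constraint; once its equivalence with the reduced form of \eqref{EulersetG} is recognised, the remaining steps of the earlier proof transfer with only cosmetic changes.
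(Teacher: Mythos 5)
Your proposal is correct and follows essentially the same route as the paper: necessity by specialising \eqref{eq:necessary} to $k=2$ with $R^{2}=\int_{\mathbb{T}^{d}}|Q|^{2}\,\mathrm{d}x$ and differentiating, and sufficiency by extending $R$ trivially in $x$, observing that the $G$-term in \eqref{eq:EulersetG} drops out at $k=2$ so that the hypothesis supplies the needed identity, choosing $G$ a positive constant, and rerunning the proof of \thmref{NLW_embed}. You in fact spell out details (the locally-constant shift of $L$, invariance of \eqref{eq:new-condition} under adding a constant to $R^{2}$) that the paper's two-line proof leaves implicit.
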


\begin{proof}
Necessity is obvious from (\ref{eq:necessary}).

To prove sufficiency, we trivially extend $R$ to $N\times\mathbb{T}^{d}$
by $R(y,x):=R(y)$, and set $G$ = 1. Then we recover \eqref{EulersetG}.
\end{proof}
\begin{rem}
(\ref{eq:new-condition}) would be satisfied if we could find a $Y$-invariant
strongly adapted 1-form $\tilde{\theta}$ (i.e. $\mathcal{L}_{Y}\tilde{\theta}=0$).
Certainly, if $Y$ is strongly geodesible or isometric (Killing vector
field), we can let $\widetilde{\theta}$ be $Y^{\flat}$. The natural
question to ask is whether (\ref{eq:new-condition}) is superfluous,
once given the existence of strongly adapted 1-forms. The answer is
no, due to the following example.
\end{rem}

\begin{example}
\label{exa:torus}Let $N=\mathbb{T}^{2}$ and $Y=f(y)\partial_{x}$
where $f\in C^{\infty}(\mathbb{T})$. For instance: $f(y)=\sin(2\pi y)$+2.
Then $f$ never vanishes and $\mathrm{\sgn}f$ does not change. Because
we can replace $Y$ by $-Y$ and $\theta$ by $-\theta$, WLOG we
assume $f>0.$

Then we note that $\mathcal{L}_{Y}(dx)=d(\iota_{Y}(dx))=df=\partial_{y}f\;dy$
and $\mathcal{L}_{Y}(dy)=d(\iota_{Y}(dy))=0$. 

Let $\theta=\theta_{1}dx+\theta_{2}dy$. Then $\mathcal{L}_{Y}\theta=f\partial_{x}\theta_{1}dx+\left(\theta_{1}\partial_{y}f+f\partial_{x}\theta_{2}\right)dy$.
We note that a 1-form is exact on $\mathbb{T}^{2}$ iff it is closed
and vanishes under integration in the $x$-direction and $y$-direction
(the generators of the fundamental group).

So $\theta$ is strongly adapted if and only if
\[
\begin{cases}
\partial_{y}\left(f\partial_{x}\theta_{1}\right)-\partial_{x}\left(\theta_{1}\partial_{y}f+f\partial_{x}\theta_{2}\right)=0\\
\int_{\mathbb{T}}f(y)\partial_{x}\theta_{1}(x,y)\;\mathrm{d}x=0 & (\text{always true})\\
\int_{\mathbb{T}}\theta_{1}(x,y)\partial_{y}f(y)+f(y)\partial_{x}\theta_{2}(x,y)\;\mathrm{d}y=0\\
f\theta_{1}>0
\end{cases}
\]

The last condition just means $\theta_{1}>0$.

An easy pick is $\theta_{1}=1,\;\theta_{2}=\text{const}$. Or $\theta_{1}(x,y)=f(y),\;\theta_{2}=\text{const. Or }$
$\theta_{1}=\frac{1}{f},$ $\;\theta_{2}=\text{const}.$ So strongly
adapted 1-forms exist here and there are many kinds of them.

We ask whether there is a strongly adapted $\theta$ such that $\mathcal{L}_{Y}\theta=d(YYr)$
for some $r\in C^{\infty}(N)$. Since 
\[
d(YYr)=d\left(f^{2}\partial_{xx}r\right)=f^{2}\partial_{xxx}rdx+\left(2f\partial_{y}f\partial_{xx}r+f^{2}\partial_{xxy}r\right)dy
\]
this would mean 
\[
\begin{cases}
f\partial_{xxx}r-\partial_{x}\theta_{1}=0\\
2f\partial_{y}f\partial_{xx}r+f^{2}\partial_{xxy}r-\theta_{1}\partial_{y}f-f\partial_{x}\theta_{2}=0
\end{cases}
\]

But by integrating the second equation in $x$, we get $\partial_{y}f(y)\int_{\mathbb{T}}\theta_{1}(x,y)dx=0\;\forall y$.
If there is $y_{0}$ such that $f'(y_{0})\neq0$, such as when $f(y)=\sin(2\pi y)$+2,
then $\int_{\mathbb{T}}\theta_{1}(x,y_{0})dx=0$. But $\theta_{1}>0$
so this is a contradiction. Therefore the condition in \Thmref{homo2con},
as well as the existence of a $Y$-invariant strongly adapted 1-form,
is not superfluous once given the existence of strongly adapted 1-forms.
\end{example}

\begin{rem}
Effectively, it means there are flows that can embed into $\mathrm{NLW}(\mathbb{T}^{d},\mathbb{R}^{m},V)$
but not $\mathrm{HomNLW_{2}}(\mathbb{T}^{d},\mathbb{R}^{m},V)$, which
suggests the case $k=2$ is much more different than the other cases. 
\end{rem}

\Exaref{torus} also provides a simple proof of the following fact:
\begin{fact}
The class of nonsingular flows with strongly adapted 1-forms is strictly
bigger than the class of strongly geodesible flows, which itself includes
the universal Turing machine.
\end{fact}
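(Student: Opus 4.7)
The fact decomposes into three statements: (i) every strongly geodesible flow admits a strongly adapted 1-form, (ii) the containment is strict, and (iii) the universal Turing machine lies in the smaller class. Statement (iii) is not new --- it is already recorded in the introduction as a consequence of \cite{taoUniversalityPotentialWell2017} --- so only (i) and (ii) require argument.

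For (i), the plan is a one-line application of Cartan's formula. If $X$ is strongly geodesible, take $\theta$ with $\theta\cdot X=1$ and $\iota_{X}d\theta=0$; then
\[
\mathcal{L}_{X}\theta=\iota_{X}d\theta+d(\iota_{X}\theta)=0+d(1)=0,
\]
so $\mathcal{L}_{X}\theta$ is trivially exact and $\theta\cdot X=1>0$. Hence the same $\theta$ witnesses strong adaptedness.

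For (ii), the strategy is to recycle \Exaref{torus}: take $N=\mathbb{T}^{2}$ and $Y=f(y)\partial_{x}$ with $f(y)=\sin(2\pi y)+2$. As already observed in that example, $\theta=dx$ is a strongly adapted 1-form since $\theta\cdot Y=f>0$ and $\mathcal{L}_{Y}(dx)=df$ is exact. It remains to rule out strong geodesibility of $Y$. I would argue this by reading off the $r\equiv0$ case of the calculation already done in \Exaref{torus}: any candidate $\theta=\theta_{1}dx+\theta_{2}dy$ with $\theta_{1}>0$ and $\mathcal{L}_{Y}\theta=0$ forces $f\partial_{x}\theta_{1}=0$ (so $\theta_{1}$ depends only on $y$) and $\theta_{1}f'+f\partial_{x}\theta_{2}=0$; integrating the latter over $x\in\mathbb{T}$ yields $\theta_{1}(y)f'(y)\equiv0$, which contradicts both $\theta_{1}>0$ and $f'\not\equiv0$. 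Since strong geodesibility would in particular produce such a $\theta$ (with $\theta\cdot Y=1$), the flow is not strongly geodesible.

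There is no real obstacle, because the decisive computation was already carried out in \Exaref{torus}; the present fact is essentially the specialization of that computation to $r\equiv0$. One minor sanity check to include is that the inclusion in (i) really needs $\theta\cdot X$ to be constant (not merely positive), which is why the containment is stated at the level of strongly geodesible flows rather than merely geodesible flows --- this is exactly the reparametrization remark recorded after the definition of strong geodesibility in the introduction.
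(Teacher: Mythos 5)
Your proof is correct and is essentially the paper's argument: the paper establishes this fact by pointing to \Exaref{torus}, which (in your $r\equiv 0$ specialization) rules out any $Y$-invariant strongly adapted $1$-form and hence strong geodesibility, while the inclusion of strongly geodesible flows via Cartan's formula and the Turing machine's membership in that class are recorded in the introduction and in the remark preceding the example, exactly as you cite them. One small correction to your closing sanity check: the inclusion in (i) does not actually require $\theta\cdot X$ to be constant, since for a merely geodesible flow Cartan's formula gives $\mathcal{L}_{X}\theta=\iota_{X}d\theta+d(\iota_{X}\theta)=d(\theta\cdot X)$, which is exact, so geodesible flows also admit strongly adapted $1$-forms; the reparametrization remark you invoke concerns replacing $X$ by $X/(\theta\cdot X)$ to pass from geodesible to strongly geodesible, not the containment of flow classes.
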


If we restrict our attention to just volume-preserving vector fields,
\parencite{cieliebakNoteStationaryEuler2017} has also shown a similar
result via a counterexample with stationary Euler flows.

\section{Embedding into HomWell}

The case of embedding $(N,Y)$ into $\mathrm{HomWell}_{k}(\mathbb{R}^{m},V)$
can be thought of as a special case where $Q:N\to H_{d}^{m}$ maps
points on $N$ to constant functions, or when $d=0$. As $\left|\nabla_{x}Q\right|$
becomes zero, applying the exterior derivative to (\ref{eq:necessary})
gives us the following theorem.
\begin{thm}
\label{thm:WellHomo}Given $k\neq0$, a compact nonsingular flow $(N,Y)$
can be embedded into $\mathrm{HomWell}_{k}(\mathbb{R}^{m},V)$ for
some $m$ and $V$ iff there is a strongly adapted 1-form $\theta$
and a function $R>0$ smooth on $N$ such that 
\[
k\mathcal{L}_{Y}\theta+\left(1-\frac{k}{2}\right)d\left(\theta\cdot Y\right)-d\left(YY\left(\frac{R^{2}}{2}\right)\right)=0
\]
\end{thm}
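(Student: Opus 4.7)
The plan is to specialize the proof of Theorem~\ref{thm:NLW_embed} to the degenerate case $d=0$, where the $\mathbb{T}^d$ factor disappears, the Laplacian $\Delta_x Q$ vanishes identically, and the freely-chosen auxiliary function $G$ is no longer available. The given hypothesis on $R$ and $\theta$ will then play exactly the role that the defining relation for $G$ played in the earlier construction; both directions of the theorem then follow by repeating the earlier computations verbatim with $d=0$.

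For necessity, an embedding $y\mapsto(Q(y),P(y))$ into $\mathrm{HomWell}_k(\mathbb{R}^m,V)$ determines $\theta\in\Omega^1(N)$ by $\theta\cdot Z=\langle YQ,ZQ\rangle$, which is strongly adapted since $Q$ is an immersion and $|Q|>0$. Running the computation from the remark after \Lemref{1st_tech_lem} with no $x$-variables yields a smooth $L$ with $\mathcal{L}_Y\theta=dL$ together with
\[
kL + \left(1-\tfrac{k}{2}\right)(\theta\cdot Y) - YY\!\left(\tfrac{|Q|^2}{2}\right) = \text{const on }N.
\]
Setting $R=|Q|$ and applying the exterior derivative removes the constant and produces exactly the stated identity.

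For sufficiency, take $\theta$ and $R$ as hypothesized, pick $L$ with $\mathcal{L}_Y\theta=dL$, and use $k\neq0$ to adjust $L$ by an additive constant on each component so that the undifferentiated identity above holds. As in the proof of Theorem~\ref{thm:NLW_embed}, build a Riemannian metric $g$ on $N$ with $g\cdot Y=\theta$, shift $R^2$ by a sufficiently large positive constant if necessary (this is permitted because $YY$ annihilates constants, so the hypothesis is preserved), and apply Nash's theorem together with the standard embedding of a compact Riemannian submanifold of $\mathbb{R}^{\ell}$ into $cS^{2\ell}$ to obtain an isometric embedding $S\colon(N,h)\to S^{m-1}$, where $h_{\alpha\beta}=(g_{\alpha\beta}-\partial_\alpha R\,\partial_\beta R)/R^2$. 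Setting $Q=RS$ then makes $Q\colon(N,g)\to\mathbb{R}^m$ an isometric embedding with $\theta\cdot W=\langle YQ,WQ\rangle$.

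The final step is the potential construction, which is identical to the one in Theorem~\ref{thm:NLW_embed}: define $v$ on $N$ by $kv=\langle -YYQ,Q\rangle$, push forward to $V_0=v\circ Q^{-1}$ on $\mathrm{Im}Q$, dehomogenize via $V_1(S)=V_0(RS)/R^k$ on $\mathrm{Im}S$, extend to $S^{m-1}$ by local normal completion and a partition of unity, and homogenize to $V$ on $\mathbb{R}^m\setminus B(0,\varepsilon)$ by $V(ts)=t^k V_1(s)$. The main obstacle is verifying the Euler-type identity $W\langle-YYQ,Q\rangle=k\langle-YYQ,WQ\rangle$ for all $W\in\mathfrak{X}(N)$; this is exactly what \Lemref{1st_tech_lem} with $d=0$ delivers, provided the undifferentiated identity holds, which is what our hypothesis guarantees. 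With that identity in hand, the local-coordinate projection argument from Theorem~\ref{thm:NLW_embed} gives $\nabla_{\mathbb{R}^m}V(Q)=-YYQ$, so that $YP=YYQ=-\nabla V(Q)$, completing the embedding.
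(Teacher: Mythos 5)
Your proof is correct and follows essentially the same route as the paper's: both directions reduce to the $d=0$ specialization of the Theorem~\ref{thm:NLW_embed} argument, with the hypothesis on $R$ and $\theta$ replacing the freely-chosen $G$. The small extra step you add—adjusting $L$ by a constant so the undifferentiated identity holds exactly—is harmless but unnecessary, since the relevant identity only needs the expression to be locally constant.
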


\begin{proof}
Necessity is obvious from (\ref{eq:necessary}).

To prove sufficiency, we redo the proof of \Thmref{NLW_embed} without
mentioning $x$ or $G$. There is a Riemannian metric $g$ on $N$
such that $g\cdot Y=\theta$. Let $L$ be smooth on $N$ such that
$dL=\mathcal{L}_{Y}(\theta)$. Then we have an analogue of (\ref{eq:EulersetG}):

\begin{equation}
kL+\left(1-\frac{k}{2}\right)\left(\theta\cdot Y\right)-YY\left(\frac{R^{2}}{2}\right)=\text{const}\label{eq:EulersetG-1}
\end{equation}
Define another Riemannian metric $h$ on $N$ by $h_{\alpha\beta}=\frac{g_{\alpha\beta}-(\partial_{\alpha}R)(\partial_{\beta}R)}{R^{2}}$.
(WLOG, by adding a constant to $R^{2}$,$\left(g_{\alpha\beta}-(\partial_{\alpha}R)(\partial_{\beta}R)\right)_{\alpha,\beta}>0$)

Then by Nash embedding, and rescaling $g,h$ and $\theta$, we obtain
an isometric embedding $S:(N,h)\to S^{m-1}$ for some $m$. Then $\left\langle \partial_{\alpha}S,\partial_{\beta}S\right\rangle =h_{\alpha\beta}=\frac{g_{\alpha\beta}-(\partial_{\alpha}R)(\partial_{\beta}R)}{R^{2}}$. 

Let $Q=RS$. Then $\left\langle \partial_{\alpha}Q,\partial_{\beta}Q\right\rangle =\left(\partial_{\alpha}R\right)\left(\partial_{\beta}R\right)+R^{2}\left\langle \partial_{\alpha}S,\partial_{\beta}S\right\rangle =g_{\alpha\beta}$
and $Q:(N,g)\to\mathbb{R}^{m}$ is also an isometric embedding. This
means for any $W\in\mathfrak{X}(N\times\mathbb{T}^{d})$ we have $\theta(W)=g(Y,W)=\left\langle YQ,WQ\right\rangle $.
Then we have 
\begin{align*}
 & W\left(kL+\left(1-\frac{k}{2}\right)\left(\theta\cdot Y\right)-YY\left(\frac{|Q|^{2}}{2}\right)\right)=0\\
\iff & W\left\langle -YYQ,Q\right\rangle =k\left\langle -YYQ,WQ\right\rangle 
\end{align*}
The rest is the same as in \Thmref{NLW_embed}.
\end{proof}
\begin{rem}
When $k=2$, we recover (\ref{eq:new-condition}), and by \Exaref{torus},
this condition is not superfluous. 

When $k\notin\left\{ 0,2\right\} $, it is possible to set $r=1,\;\theta_{1}=f^{\frac{k+2}{k-2}},\theta_{2}=\text{const}$,
which then gives $2k\mathcal{L}_{Y}(\theta)=(k-2)d(\theta\cdot Y)$.
So the example doesn't help, and we do not know whether this condition
is superfluous. It is not known whether this is a distinct class of
flows from the one described in \Thmref{homo2con} (see \Queref{how_many}).
\end{rem}

\begin{rem}
The case $k=0$ is a bit special. (\ref{eq:necessary}) forces $0=-YY\left(\frac{R^{2}}{2}\right)+\theta\cdot Y$.
So $Y\left(\frac{R^{2}}{2}\right)$ is strictly increasing along the
flow of $Y$, and the rate of increase has a minimum positive value
(as $\theta\cdot Y>0$), so it will blow up, which cannot happen on
a compact space. Therefore we have the following theorem:
\end{rem}

\begin{thm}
A compact nonsingular flow $(N,Y)$ can never be embedded into $\mathrm{HomWell}_{0}(\mathbb{R}^{m},V)$
for any $m$ or V.
\end{thm}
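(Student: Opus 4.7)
The plan is to derive a contradiction from the assumption that $(N,Y)$ embeds as $a \mapsto (Q(a),P(a))$ into $\mathrm{HomWell}_0(\mathbb{R}^m,V)$. First I would record the identities forced by the embedding: since the generator of the Well flow is $X(q,p)=(p,-\nabla V(q))$, the condition $d\phi\cdot Y = X$ yields $YQ = P$ and $YP = -\nabla V(Q)$, and the 1-form $\theta$ defined by $\theta\cdot W := \langle YQ, WQ\rangle$ is strongly adapted, with $\theta\cdot Y = |P|^2$ strictly positive on $N$.

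The crux is Euler's condition (\ref{eq:euler_Cond}) with $k=0$: since $\mathrm{Im}\,Q$ lies in the region where $V$ is homogeneous of degree $0$, we have $\langle \nabla V(x), x\rangle = 0\cdot V(x) = 0$ there, hence $\langle \nabla V(Q), Q\rangle \equiv 0$ on $N$. A direct two-step differentiation then collapses cleanly to
\[
YY\!\left(\frac{|Q|^{2}}{2}\right) \;=\; Y\langle P,Q\rangle \;=\; \langle YP, Q\rangle + \langle P, YQ\rangle \;=\; -\langle \nabla V(Q), Q\rangle + |P|^{2} \;=\; \theta\cdot Y.
\]
This is the $k=0$ specialization of the necessary condition of \Thmref{WellHomo}, but here without any constant of integration: the factor $k$ in Euler's identity is precisely what would produce such a constant, and it vanishes at $k=0$.

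To finish, strong adaptation and compactness of $N$ give $\epsilon>0$ with $\theta\cdot Y \geq \epsilon$. Along any orbit $t\mapsto\phi_t(p)$ of $Y$, the smooth function $Y(|Q|^{2}/2)$ therefore satisfies $\tfrac{d}{dt}Y(|Q|^{2}/2) \geq \epsilon$, so it grows without bound in forward time. But $Y(|Q|^{2}/2)$ is smooth on the compact manifold $N$ and so bounded, a contradiction. There is essentially no obstacle to this argument; the only delicate point is that the degree of homogeneity being exactly $0$, as opposed to any other value, is what removes the constant of integration and produces the pointwise inequality $YY(|Q|^2/2)\geq\epsilon>0$ that compactness immediately rules out.
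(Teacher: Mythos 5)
Your proof is correct and follows essentially the same route as the paper: the remark preceding the theorem derives $YY(R^{2}/2)=\theta\cdot Y$ from the necessary condition at $k=0$ and concludes that $Y(R^{2}/2)$ increases at a rate bounded below by a positive constant, which is impossible on a compact manifold. Your direct two-step differentiation using $\langle\nabla V(Q),Q\rangle=0$ is in fact slightly cleaner, since it produces the identity pointwise with no constant of integration, justifying the paper's assertion that (\ref{eq:necessary}) ``forces'' the constant to vanish in this case.
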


\begin{cor}[Spherical embedding]
\label{cor:spherical} ~Let $k\neq0$ and $(N,Y)$ be a compact
nonsingular flow. There is a spherical embedding from $(N,Y)$ into
$\text{\ensuremath{\mathrm{HomWell}}}_{k}(\mathbb{R}^{n},V)$ if and
only if $(N,Y)$ has a strongly adapted 1-form $\theta$ where 
\[
2k\mathcal{L}_{Y}(\theta)=(k-2)d(\theta\cdot Y)
\]
\end{cor}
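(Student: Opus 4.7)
The plan is to reduce the statement to \Thmref{WellHomo} by recognizing that a spherical embedding $Q(N) \subseteq c S^{n-1}$ is exactly the case where $|Q| = c$ is constant, so that the auxiliary function $R$ in that theorem can be (and must be) taken to be a positive constant.

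For necessity, I would start from a spherical embedding with $R := |Q| \equiv c$. Since $R$ is constant, $YY(R^2/2) = 0$, and the necessary condition from \Thmref{WellHomo} (which itself is derived from (\ref{eq:necessary})) collapses to
\[
k \mathcal{L}_Y \theta + \left(1 - \frac{k}{2}\right) d(\theta \cdot Y) = 0.
\]
Clearing fractions gives the stated identity $2k \mathcal{L}_Y(\theta) = (k-2) d(\theta \cdot Y)$.

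For sufficiency, given $\theta$ strongly adapted with $2k \mathcal{L}_Y \theta = (k-2) d(\theta \cdot Y)$, I would pick any positive constant $c$ and set $R \equiv c$. Then the hypothesis of \Thmref{WellHomo} is automatically verified, so an embedding exists; the task is to confirm it can be arranged to be spherical. Re-examining the construction in the proof of \Thmref{WellHomo} with $R$ constant, the auxiliary metric $h_{\alpha\beta} = (g_{\alpha\beta} - (\partial_\alpha R)(\partial_\beta R))/R^2$ simplifies to $g_{\alpha\beta}/c^2$, which is automatically positive definite (so no constant needs to be added to $R^2$). The Nash embedding then yields $S : N \to S^{m-1}$, and $Q = RS = cS$ lands in $c S^{m-1}$, producing the desired spherical embedding.

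Since the analytic content is carried entirely by \Thmref{WellHomo}, I do not expect any genuinely new obstacle; the corollary amounts to tracking the condition for $R$ to be takeable as a constant and checking that the construction in that theorem then automatically yields a spherical image.
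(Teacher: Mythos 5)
Your reduction is correct, and it is the natural (and evidently intended) route: the paper states \Corref{spherical} without proof, implicitly as a direct consequence of \Thmref{WellHomo}, and your argument supplies exactly that reduction. On the necessity side, a spherical embedding forces $|Q|\equiv c$, so setting $R=|Q|$ kills the $d(YY(R^2/2))$ term in the condition of \Thmref{WellHomo}, leaving $2k\mathcal{L}_Y\theta=(k-2)d(\theta\cdot Y)$. On the sufficiency side, taking $R$ a positive constant makes $h=g/R^2$ automatically positive definite, and the construction $Q=RS$ with $S:(N,h)\to S^{m-1}$ lands in $cS^{m-1}$, which is precisely the definition of a spherical embedding. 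One small point worth noting explicitly: the condition $2k\mathcal{L}_Y\theta=(k-2)d(\theta\cdot Y)$, as well as the relation $g\cdot Y=\theta$ and $h=g/c^2$, are all homogeneous in $(\theta,g,h)$, so the rescaling step needed before applying the Nash/sphere embedding in the proof of \Thmref{WellHomo} leaves $R$ constant and keeps the image spherical; this is what makes the reduction go through cleanly.
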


\begin{rem}
We also see that the condition $\mathcal{L}_{Y}(\theta)=\frac{k-2}{2k}d(\theta\cdot Y)$
implies 
\[
Y(\theta\cdot Y)=\mathcal{L}_{Y}(\theta)\cdot Y=\frac{k-2}{2k}Y(\theta\cdot Y)
\]
If $k\neq-2$, this means that the kinetic energy $\theta\cdot Y$
and the Lagrangian $L$ are constant along the flow of $Y.$
\end{rem}

\begin{cor}
\label{cor:0L} If $(N,Y)$ has a strongly adapted 1-form $\theta$
and $\mathcal{L}_{Y}(\theta)=0$, we can spherically embed $(N,Y)$
into $\text{\ensuremath{\mathrm{HomWell}}}_{2}(\mathbb{R}^{n},V)$
for some $n$ and $V$. 
\end{cor}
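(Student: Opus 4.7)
The plan is to derive this corollary as an immediate specialization of the preceding Corollary \ref{cor:spherical} to the case $k=2$. Recall that Corollary \ref{cor:spherical} characterizes the existence of a spherical embedding into $\mathrm{HomWell}_k(\mathbb{R}^n,V)$ (for $k\neq 0$) by the condition
\[
2k\,\mathcal{L}_Y(\theta) \;=\; (k-2)\,d(\theta\cdot Y)
\]
on a strongly adapted $1$-form $\theta$. Setting $k=2$ makes the right-hand side vanish identically, so the characterizing condition collapses to $4\mathcal{L}_Y(\theta)=0$, which is exactly the hypothesis of the corollary.

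Concretely, I would proceed in two short steps. First, I would invoke Corollary \ref{cor:spherical} with $k=2$ and observe that the coefficient $(k-2)$ on $d(\theta\cdot Y)$ vanishes, so only the term $2k\,\mathcal{L}_Y(\theta)=4\mathcal{L}_Y(\theta)$ remains. Second, the assumption $\mathcal{L}_Y(\theta)=0$ given in the hypothesis immediately satisfies this reduced equation, so Corollary \ref{cor:spherical} produces the desired spherical embedding of $(N,Y)$ into $\mathrm{HomWell}_2(\mathbb{R}^n,V)$ for some $n$ and $V$.

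There is no genuine obstacle here: all the heavy lifting, namely the Nash embedding argument and the construction of the homogeneous potential $V$ via extension from the sphere, has already been carried out in the proof of Theorem \ref{thm:WellHomo} and packaged into Corollary \ref{cor:spherical}. The only thing to verify is the trivial algebraic identity that $k=2$ kills the $d(\theta\cdot Y)$ term, which is why the $Y$-invariance of $\theta$ alone suffices. In particular, no choice of auxiliary function $R$ or metric $g$ needs to be made by hand in this corollary, since the reduction to Corollary \ref{cor:spherical} already encapsulates those choices.
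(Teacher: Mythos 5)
Your proposal is correct and matches the paper's intent: the paper states \corref{0L} without proof, immediately after \corref{spherical}, precisely because setting $k=2$ kills the $d(\theta\cdot Y)$ term and reduces the characterizing condition to $\mathcal{L}_Y(\theta)=0$. Your two-step reduction is exactly the intended argument.
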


Again, by \Exaref{torus}, there are flows which can be embedded into
$\text{Well}(\mathbb{R}^{n},V),$ but not $\text{\ensuremath{\mathrm{HomWell}}}_{2}(\mathbb{R}^{n},V)$.
\begin{example}
Let $(N,Y)$ be a compact non-singular smooth flow.
\begin{itemize}
\item When $N=(S^{1})^{n}\subset\mathbb{C}^{n}=\mathbb{R}^{2n},\;Y(a)=ia,$
we can obviously embed $(N,Y)$ into $\text{HomWell}_{2}(\mathbb{R}^{2n},V)$
where $V(x)=\frac{|x|^{2}}{2}$ (actually smooth at the origin). With
the induced Euclidean metric on $N$, $\nabla_{Y}Y=0.$ Let $\theta=Y^{b}$.
Then $\forall Z\in\mathfrak{X}N:$ 
\begin{align*}
\mathcal{L}_{Y}(\theta)\cdot Z & =Y(\theta\cdot Z)-\theta\cdot[Y,Z]=Y\left\langle Y,Z\right\rangle -\left\langle Y,[Y,Z]\right\rangle \\
 & =\left\langle Y,\nabla_{Y}Z\right\rangle -\left\langle Y,\nabla_{Y}Z-\nabla_{Z}Y\right\rangle \\
 & =\left\langle Y,\nabla_{Z}Y\right\rangle =\frac{1}{2}Z(|Y|^{2})=0
\end{align*}
so $\mathcal{L}_{Y}(\theta)=0$, $Q(a)=a,P=YQ=iQ,YP=iP=-Q=-\nabla_{\mathbb{R}^{2n}}V(Q)$.
We note that $(N,Y)$ is both \emph{isometric} and \emph{strongly
geodesible }(see below).
\item When $(N,Y)$ is isometric / Killing, i.e. there is a Riemannian metric
$g$ such that $\mathcal{L}_{Y}g=0$: By letting $\theta=g\cdot Y$,
we have $\mathcal{L}_{Y}(\theta)=\mathcal{L}_{Y}(g)\cdot Y+g\cdot\mathcal{L}_{Y}(Y)=0$
and \Corref{0L} applies.
\item When $(N,Y)$ is strongly geodesible, i.e. there is a Riemannian metric
$g$ such that $\nabla_{Y}Y=0$ and $|Y|_{g}=1$: let $\theta=g\cdot Y$
and we have 
\begin{align*}
\mathcal{L}_{Y}(\theta)\cdot Z & =Yg(Y,Z)-g(Y,[Y,Z])=g(Y,\nabla_{Y}Z)-g(Y,\nabla_{Y}Z-\nabla_{Z}Y)\\
 & =g(Y,\nabla_{Z}Y)=\frac{1}{2}Z(|Y|_{g}^{2})=0
\end{align*}
So \Corref{0L} applies.
\item When $N=M\times[0,1]/\left((y,1)\sim(\phi(y),0)\right)=\left\{ [(y,t)]:y\in M,t\in[0,1]\right\} $
where $\phi:M\to M$ is a diffeomorphism on a compact smooth manifold
$M$, and $Y=(X,\partial_{t})$ where $X\in\mathfrak{X}M$ such that
$X=X\circ\phi$: Let $\theta=(0,dt)$, then $\mathcal{L}_{Y}(\theta)=0$.
This is the relevant case for universal Turing machines, as shown
in \parencite{taoUniversalityPotentialWell2017}.
\end{itemize}
\end{example}

\printbibliography
\end{document}